\def\qed{\hfill {\hbox{${\vcenter{\vbox{               
   \hrule height 0.4pt\hbox{\vrule width 0.4pt height 6pt
   \kern5pt\vrule width 0.4pt}\hrule height 0.4pt}}}$}}}
\newtheorem{theorem}{Theorem}
\newtheorem{lemma}[theorem]{Lemma}
\newtheorem{proposition}[theorem]{Proposition}
\newtheorem{corollary}[theorem]{Corollary}
\theoremstyle{definition}
\newtheorem{example}{Example}
\newtheorem{definition}{Definition}
\newtheorem{remark}{Remark}
\date{}
\title{\Large \textbf{Biquasile Colorings of Oriented Surface-Links}}
\author{Jieon Kim\footnote{Email:jieonkim@sci.osaka-cu.ac.jp. Supported by JSPS overseas post doctoral fellow Grant Number 15F15319.} \and
Sam Nelson\footnote{Email: Sam.Nelson@cmc.edu. Partially supported by Simons Foundation collaboration grant 316709}}
\begin{document}
\maketitle

\begin{abstract}
We introduce colorings of oriented surface-links by biquasiles using marked
graph diagrams. We use these colorings to define counting invariants and 
Boltzmann enhancements of the biquasile counting invariants for oriented 
surface-links. We provide examples to show that the invariants can distinguish
both closed surface-links and cobordisms and are sensitive to orientation.
\end{abstract}

\parbox{5.5in} {\textsc{Keywords:} Biquasiles, counting invariants, 
surface-links, marked graph diagrams

\smallskip

\textsc{2010 MSC:} 57M27, 57M25}

\section{\large\textbf{Introduction}}\label{I}

In \cite{dsn2}, a type of algebraic structure known as \textit{biquasiles} was 
introduced and used to define invariants of oriented classical knots and links
via counting colorings of graphs known as \textit{dual graph diagrams}. 
Dual graph versions of a generating set of oriented Reidemeister moves
were identified and used to motivate the biquasile axioms and to 
prove invariance of the set of biquasile colorings under these moves. 
Biquasiles can be understood as a special case of the ternary algebraic 
structures defined in \cite{NM}.

In \cite{Lo}, planar graphs with extra information known as \textit{marked 
graph diagrams} (also sometimes called \textit{marked vertex diagrams} or 
\textit{ch-diagrams}) were introduced as way of encoding knotted and linked
surfaces in $\mathbb{R}^4$, known as \textit{surface-links}.  In \cite{Yo} 
a set of moves on marked graph diagrams encoding ambient isotopy of surfaces in 
$\mathbb{R}^4$ analogous to the Reidemeister moves was proposed. In \cite{KJL}, 
generating sets of these \textit{Yoshikawa moves} which are necessary and 
sufficient for ambient isotopy of oriented surface-links were identified. 

In this paper we define biquasile counting invariants for oriented 
surface-links and use them to define new nonnegative integer-valued invariants
of oriented surface-links. We then enhance these invariants with 
\textit{Boltzmann weights} taking values in a commutative ring $R$ such that 
the multiset of Boltzmann weight values over the complete set of biquasile 
colorings of a marked graph diagram defines a stronger invariant of 
surface-links from which we can recover the biquasile counting invariant by
taking the cardinality of the multiset. In particular, these invariants
potentially provide obstructions to cobordism between classical knots and links.
As this paper was nearing completion, the authors learned that similar results
have been independently obtained and recently presented by Maciej Niebrzydowski
\cite{MN}.

The paper is organized as follows. In 
Section \ref{M} we review the basics of marked graph diagrams and 
surface-links. In Section \ref{SB} we review biquasiles 
and define the biquasile counting invariant for surface-links. We provide 
examples to show that biquasile colorings can distinguish surface-links and 
can detect orientation reversals.
In Section \ref{BW} we recall the biquasile Boltzmann weight enhancement
and extend it to the case of oriented surface-links with some examples.
As an application we show that these invariants can distinguish
non-isotopic cobordisms between links. We end in Section \ref{Q} with some 
open questions for future research.

\section{\large\textbf{Marked Graph Diagrams}}\label{M}

In this section, we review (oriented) marked graph diagrams representing 
surface-links.

\begin{definition}
A {\it marked graph} is a 4-valent graph in $\mathbb{R}^3$ each of whose 
vertices has a marker that looks like
\[
\xy (-5,5);(5,-5) **@{-},
(5,5);(-5,-5) **@{-},
(3,-0.2);(-3,-0.2) **@{-},
(3,-0.1);(-3,-0.1) **@{-},
(3,0);(-3,0) **@{-},
(3,0.1);(-3,0.1) **@{-},
(3,0.2);(-3,0.2) **@{-},
\endxy.\] Two marked graphs are said to be {\it equivalent} if they are ambient 
isotopic in $\mathbb{R}^3$
with keeping the rectangular neighborhoods of markers. A marked 
graph in $\mathbb{R}^3$ can be described by a link diagram on $\mathbb{R}^2$ 
with some $4$-valent vertices equipped with markers, called a {\it marked 
graph diagram}.
\end{definition}


\begin{definition}
An {\it orientation} of a marked graph $G$ in $\mathbb{R}^3$ is a choice of an 
orientation for each edge of $G.$
An orientation of a marked graph $G$ is said to be {\it consistent} if every 
vertex in $G$ looks like
\[
\includegraphics{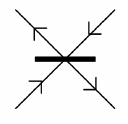} 
\quad\raisebox{0.25in}{or}\quad 
\includegraphics{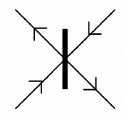}.\]
A marked graph $G$ in $\mathbb{R}^3$ is said to be {\it orientable} if 
$G$ admits a consistent orientation. Otherwise, it is said to be 
{\it non-orientable}.
\end{definition}

\begin{definition}
By an {\it oriented marked graph}, we mean an orientable marked graph in
$\mathbb{R}^3$ with a fixed consistent orientation. Two oriented marked graphs 
are said to be {\it equivalent} if they are ambient isotopic in $\mathbb{R}^3$
with keeping the rectangular neighborhood of the marker and consistent orientation.
\end{definition}

For $t \in \mathbb{R},$ we denote by $\mathbb{R}^3_t$ the hyperplane of 
$\mathbb{R}^4$ whose fourth coordinate is equal to $t \in \mathbb{R}$, i.e., 
$\mathbb{R}^3_t = \{(x_1, x_2, x_3, x_4) \in
\mathbb{R}^4~|~ x_4 = t \}$. A surface-link 
$\mathcal{L}\subset \mathbb{R}^4=\mathbb{R}^3 \times \mathbb{R}$ can be 
described in terms of its {\it cross-sections} $\mathcal{L}_t=\mathcal{L} 
\cap \mathbb{R}^3_t, ~ t \in \mathbb{R}$ (cf. \cite{Fox}).

It is known (\cite{KSS,Lo}) that any surface-link $\mathcal{L}$ is equivalent 
to a surface-link $\mathcal{L}'$ such that the projection 
$p_{\mathcal{L}'}:\mathcal{L}' \to \mathbb{R}$ satisfies the following conditions:
\begin{itemize}
\item[(1)]A surface-link $\mathcal{L}'$ has finitely many critical points and 
all critical points are non-degenerate.
\item[(2)] All the index 0 critical points (minimal points) are in 
$\mathbb{R}^3_{-1}$.
\item[(3)] All the index 1 critical points (saddle points) are in 
$\mathbb{R}^3_{0}$.
\item[(4)] All the index 2 critical points (maximal points) are in 
$\mathbb{R}^3_{1}$.
\end{itemize}
We call $\mathcal{L}'$ a {\it normal form } of $\mathcal{L}.$

Let $\mathcal{L}$ be a surface-link in $\mathbb{R}^4$, and $\mathcal{L}'$ 
a normal form of $\mathcal{L}.$ Then $\mathcal{L}'_0$ is a spatial $4$-valent 
regular graph in $\mathbb{R}^3_0$. We give a marker at each $4$-valent vertex 
(saddle point) that indicates how the saddle point opens up above as 
illustrated.
\[\includegraphics{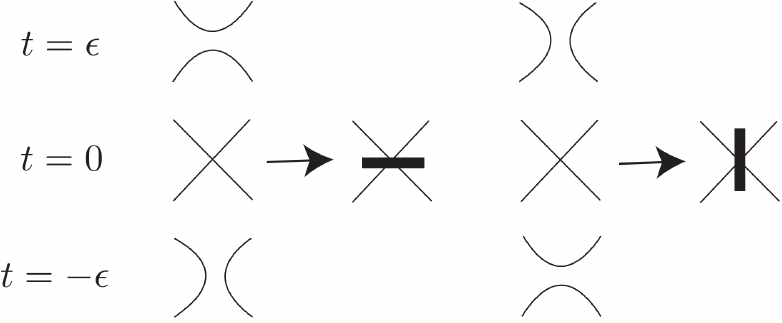}\]
We choose an orientation for each edge of $\mathcal{L}'_0$
that coincides with the induced orientation on the boundary of 
$\mathcal{L}' \cap \mathbb{R}^3 \times (-\infty, 0]$ from the orientation of
$\mathcal{L}'$.
The resulting oriented marked graph $G$ is called an {\it oriented marked 
graph} of $\mathcal{L}$.
As usual, $G$ is described by a link diagram $D$ with rigid marked vertices. 
Such a diagram $D$ is called an {\it oriented marked
graph diagram} or an {\it oriented ch-diagram} (cf. \cite{So}) of $\mathcal{L}$.

Let $D$ be an oriented marked graph diagram. We obtain two links $L_-(D)$ and 
$L_+(D)$ from $D$ by replacing each marked vertex in $D$ as shown. 
\[\includegraphics{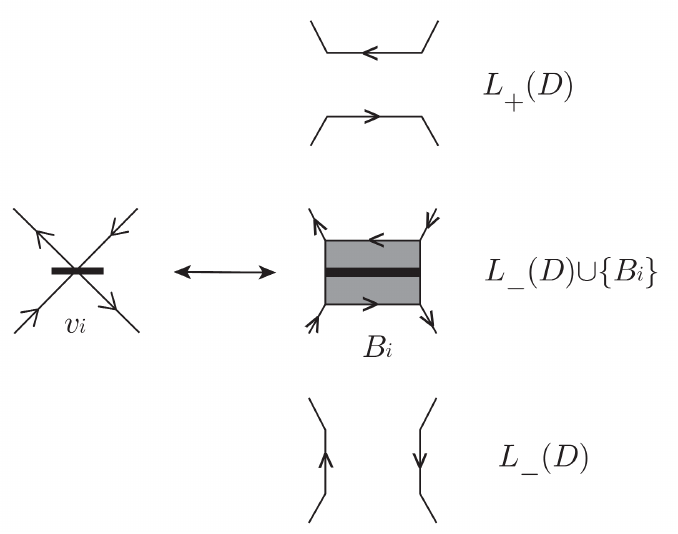}\]
The links $L_-(D)$ and $L_+(D)$ are also called the {\it negative resolution} 
and the {\it positive resolution} of $D$, respectively. Conversely, we obtain
an oriented surface-link by replacing a 
neighborhood of each marked vertex $v_i~ (1 \leq i \leq n)$ with an oriented 
band $B_i$ as illustrated.
\[\scalebox{0.8}{\includegraphics{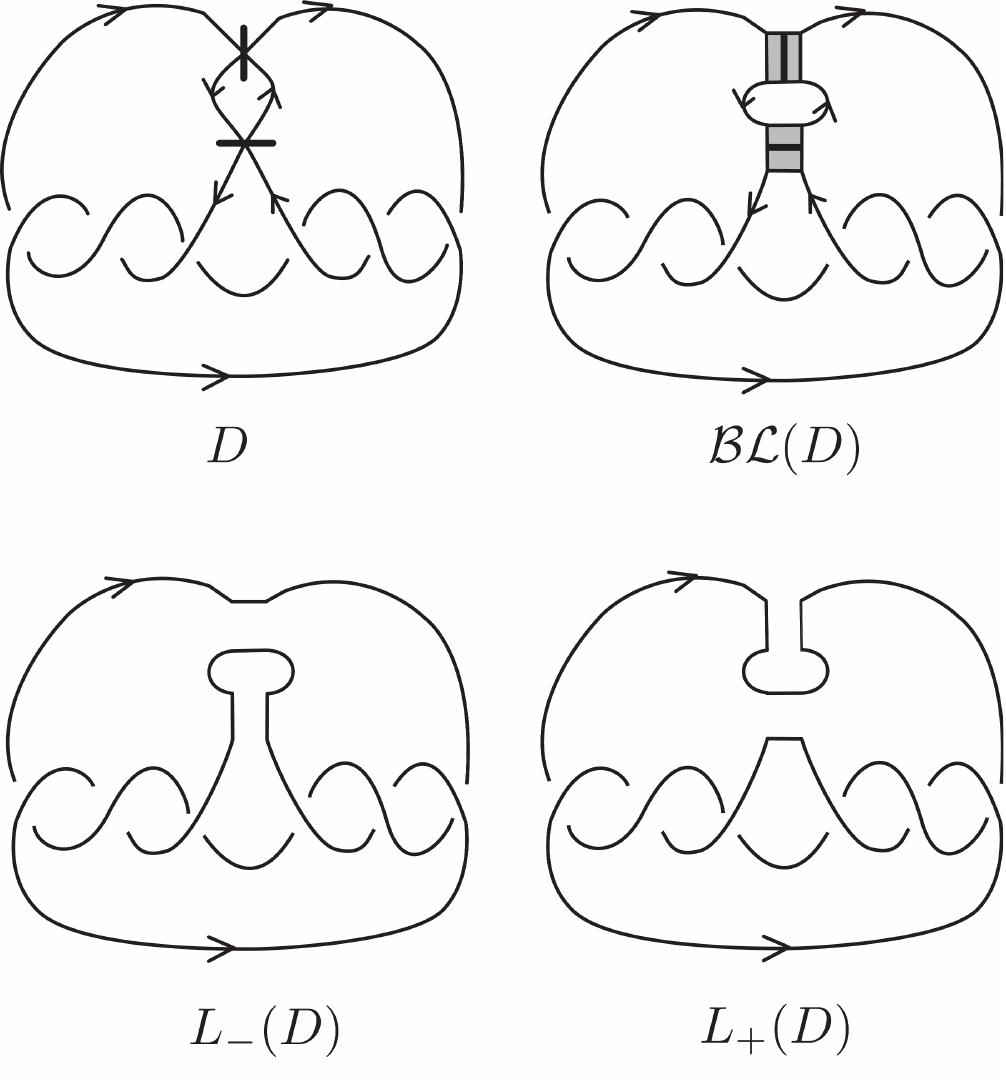}}\]
Denote the disjoint union $B_1\sqcup\cdots\sqcup B_n$ of bands by 
$\mathcal B(D).$
A marked graph diagram $D$ is said to be {\it admissible}
if both resolutions $L_-(D)$ and $L_+(D)$ are trivial link diagrams.

%
%
%
%
%

Let us describe how to construct an oriented surface-link 
$\mathcal{L}$ in $\mathbb{R}^4$ from any given admissible marked graph 
diagram up to equivalence \cite{Ka,KSS,Kaw,Yo}.
Let $\Delta_1,\ldots,\Delta_a \subset \mathbb{R}^3$ be mutually disjoint 
$2$-disks with $\partial(\cup_{j=1}^a\Delta_j)= L_+(D)$, and let 
$\Delta_1',\ldots,\Delta_b' \subset \mathbb{R}^3$ be mutually disjoint 
$2$-disks with $\partial(\cup_{k=1}^b\Delta_k')= L_-(D)$. We define a 
surface-link $\mathcal S(D) \subset \mathbb{R}^3 \times \mathbb{R} = 
\mathbb{R}^4 $ by
\begin{equation*}
(\mathbb{R}^3_t, \mathcal{S}(D) \cap \mathbb{R}^3_t)=\left\{%
\begin{array}{ll}
(\mathbb R^3, \phi) & \hbox{for $t > 1$,}\\
(\mathbb R^3, L_+(D) \cup (\cup_{j=1}^a\Delta_j)) & \hbox{for $t = 1$,} \\
(\mathbb R^3, L_+(D)) & \hbox{for $0 < t < 1$,} \\
(\mathbb R^3, L_-(D) \cup (\cup_{i=1}^n B_i)) & \hbox{for $t = 0$,} \\
(\mathbb R^3, L_-(D)) & \hbox{for $-1 < t < 0$,} \\
(\mathbb R^3, L_-(D) \cup (\cup_{k=1}^b\Delta_k')) & \hbox{for $t = -1$,} \\
(\mathbb R^3, \phi) & \hbox{for $ t < -1$.} \\
\end{array}%
\right.
\end{equation*}

It is proved in \cite{KSS} that the isotopy type of $\mathcal S(D)$ does not 
depend on choices of $\Delta_j$'s and $\Delta_k'$'s. We choose an orientation 
for the surface-link $\mathcal{S}(D)$ so that the orientation of the 
cross-section $\mathcal S(D)_0=\mathcal{S}(D) \cap \mathbb{R}^3_0$ induced 
from the chosen orientation of $\mathcal{S}(D)$ is coherent to the orientation 
of $\mathcal{BL}(D)$.

We call the oriented surface-link $\mathcal{S}(D)$ the {\it oriented 
surface-link associated with $D$}. It is easily seen that $D$ is a marked 
graph diagram associated with the oriented surface-link $\mathcal{S}(D)$.
In particular, an admissible diagram represents a closed surface-link while 
a non-admissible diagram represents a \textit{cobordism} between the positive 
and negative resolutions. 

\begin{example}
The oriented marked graph diagram below represents the pictured cobordism 
between the unknot and unlink of two components depicted by the broken surface 
diagram below.
\[\raisebox{0.3in}{\includegraphics{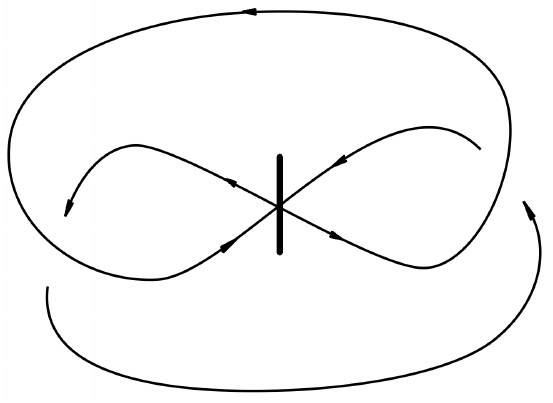}}\quad
\includegraphics{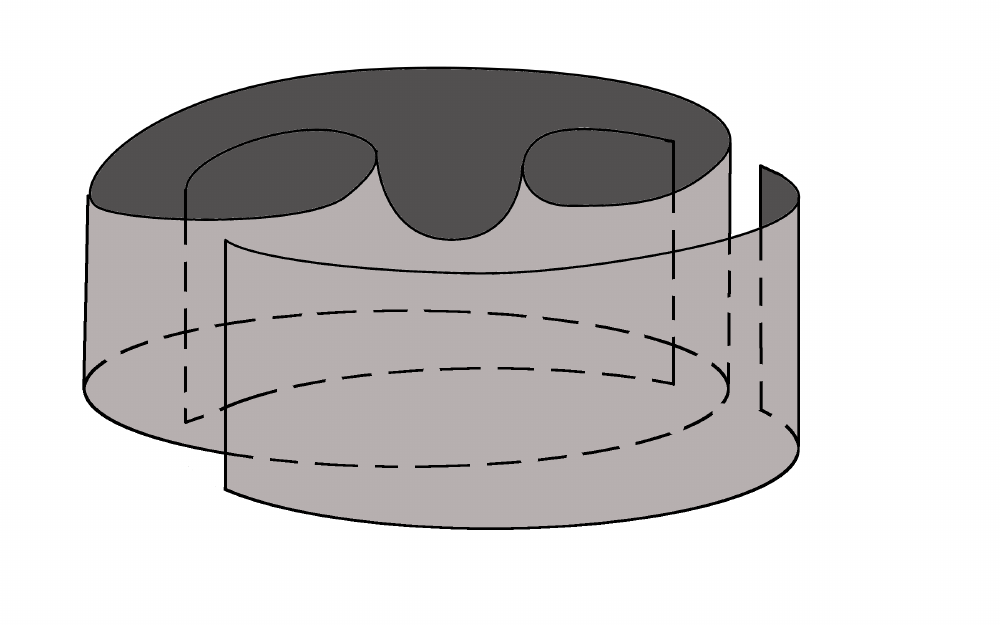}\]
Since the marked graph diagram is admissible, this cobordism can be capped 
off with disks to obtained a sphere in $\mathbb{R}^4$, in this case an 
unknotted sphere.
\end{example}

Two oriented marked graph diagrams represent ambient isotopic surface-links 
if and only if they are related by the following local moves, known as
\textit{Yoshikawa moves}, in addition to the usual oriented Reidemeister
moves; see \cite{KJL} for more.
\[\scalebox{1.4}{\includegraphics{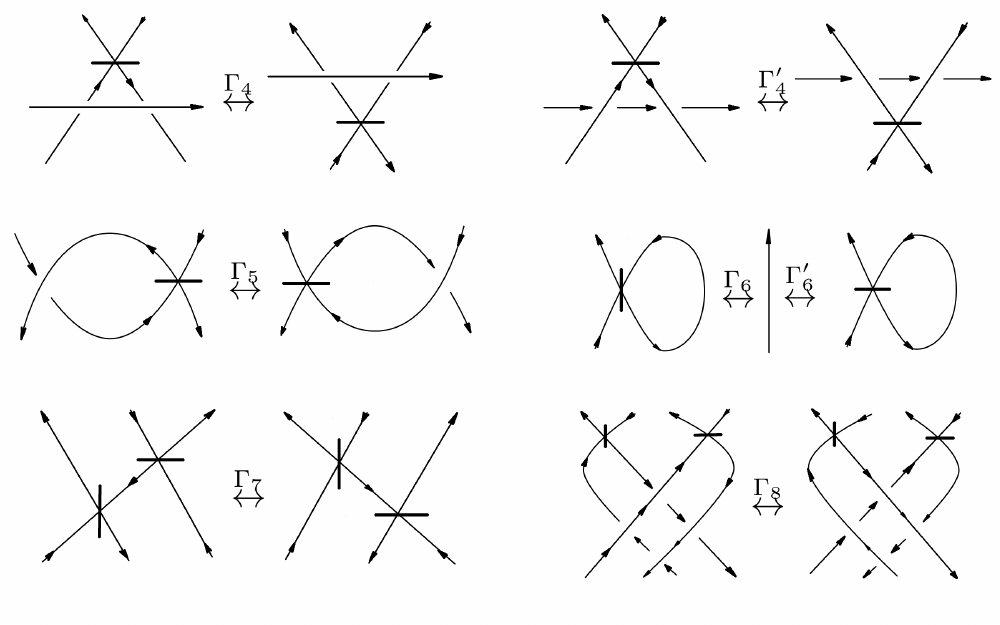}}\]

\section{\large\textbf{Biquasile Colorings of Surface-Links}}\label{SB}

We begin this section with a definition from \cite{dsn2}.

\begin{definition}
A \textit{biquasile} is a set $X$ with six binary operations 
$\ast,\backslash^{\ast},/^{\ast},\cdot, \backslash$and $/$ satisfying the axioms
\begin{itemize}
\item[(i)] For all $x,y\in X$ we have
\[
\begin{array}{rcccl}
y\backslash^{\ast}(y\ast x) & = & x & = & (x\ast y)/^{\ast} y\\
y\backslash(y\cdot x) & = & x & = & (x\cdot y)/ y\\
\end{array}
\]
and
\item[(ii)]
\[\begin{array}{rcl}
a\ast(x\cdot [y\ast(a\cdot b)]) 
& = & (a\ast[x\cdot y])\ast(x\cdot [y\ast([a\ast(x\cdot y)]\cdot b)]) \\
y\ast([a\ast (x\cdot y)]\cdot b) 
& = & (y\ast[a\cdot b])\ast([a\ast (x\cdot [y\ast(a\cdot b)])]\cdot b).
\end{array}\]
\end{itemize}
Axiom (i) says that $X$ forms a \textit{quasigroup} under both the $\ast$
and $\cdot$ operations (hence ``biquasile'') with left and right inverse 
operations $\backslash^{\ast},\backslash$ and $/^{\ast},/$ respectively (at
least in the finite case; see remark \ref{QG} below)
and axiom (ii) specifies the
relationship between the operations, like a complicated form of distributivity.
\end{definition}

\begin{remark}\label{QG}
The conditions in axiom (i) arising from the Riedemeister moves are really 
the requirements that for every $y\in X$, the maps 
$x\mapsto x\ast y, y\ast x, x\cdot y$ and $y\cdot x$ are invertible with
inverse maps given by $x\mapsto x/^{\ast}y$ etc. If $X$ is a finite set 
then the inverse maps commute with the original maps and we also have
\[
\begin{array}{rcccl}
y\ast (y\backslash^{\ast} x) & = & x & = & (x/^{\ast} y)\ast x\\
y\cdot(y\backslash x) & = & x & = & (x/y)\cdot y;
\end{array}
\]
however, for infinite $X$ these conditions are not imposed \textit{a priori}
in the biquasile definition. In practice, we have not considered biquasiles
which do not also satisfy these additional conditons; it may be of interest 
to do so in the future.
\end{remark}

\begin{example}
Let $X$ be a module over $\mathbb{Z}[d^{\pm 1}, s^{\pm 1}, n^{\pm 1}]$. Then
$X$ is a biquasile under the operations
\[x\ast y= -dsn^2x+ ny\quad \mathrm{and}\quad 
x\cdot y = dx+ sy
. \]
We have left inverse operations given by
\[x\backslash^{\ast} y = n^{-1} x+ dsn y 
\quad \mathrm{and}\quad 
x\backslash y = -ds^{-1}x+s^{-1}y
\]
and right inverse operations
\[x\backslash^{\ast} y = -d^{-1}s^{-1}n^{-2} x+ d^{-1}s^{-1}n^{-1} y
\quad \mathrm{and}\quad 
x/y = -ds^{-1}x+s^{-1}y.\]
Biquasiles of this sort are called \textit{Alexander biquasiles}.
\end{example}

\begin{example}
For finite biquasiles we can specify the biquasile structure with
a block matrix encoding the operation tables of $\cdot$ and $\ast$.
For example, the Alexander biquasile structure on $X=\mathbb{Z}_4$
with $d=s=1$ and $n=3$ has operations
\[x\ast y=-dsn^2x+ny=-(1)(1)(3^2)x+3y=3x+3y
\quad\mathrm{and}\quad 
x\cdot y=dx+sy=x+y
\]
so we have operation tables (using 4 for the class of 0 mod 4 since we number
our rows and columns starting with 1):
\[
\begin{array}{r|rrrr}
\ast & 1 & 2 & 3 & 4\\\hline
1 & 2 & 1 & 4 & 3 \\
2 & 1 & 4 & 3 & 2 \\ 
3 & 4 & 3 & 2 & 1 \\
4 & 3 & 2 & 1 & 4
\end{array}
\quad\mathrm{and}\quad
\begin{array}{r|rrrr}
\cdot & 1 & 2 & 3 & 4\\\hline
1 & 2 & 3 & 4 & 1 \\
2 & 3 & 4 & 1 & 2 \\ 
3 & 4 & 1 & 2 & 3 \\
4 & 1 & 2 & 3 & 4
\end{array}
\]
or in matrix form
\[\left[
\begin{array}{rrrr|rrrr}
2 & 1 & 4 & 3 & 2 & 3 & 4 & 1 \\ 
1 & 4 & 3 & 2 & 3 & 4 & 1 & 2 \\
4 & 3 & 2 & 1 & 4 & 1 & 2 & 3 \\  
3 & 2 & 1 & 4 & 1 & 2 & 3 & 4 
\end{array}
\right].
\]
\end{example}

\begin{definition}
Let $X$ be a biquasile. Then a \textit{biquasile coloring} of an oriented 
marked graph 
diagram $D$ is an assignment of elements of $X$ to the regions in the planar 
complement of $D$ such that at every classical crossing or marked vertex, we 
have the following:
\[\includegraphics{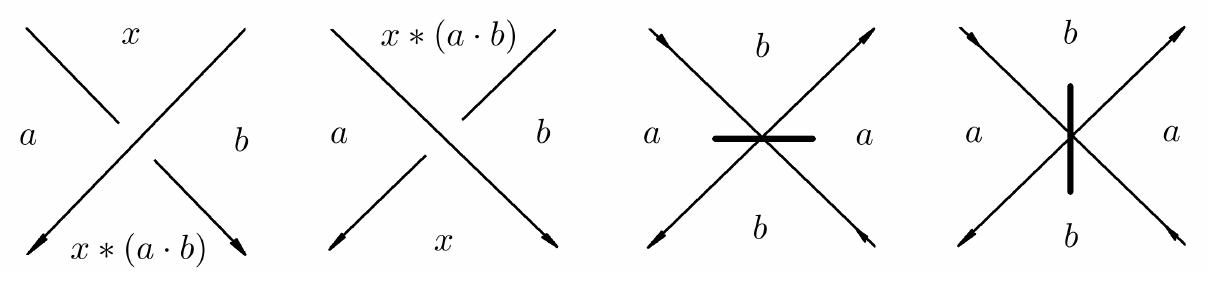}\]
\end{definition}

\begin{theorem}
If an oriented marked graph diagram $D$ has a biquasile coloring by a 
biquasile $X$ before
a Yoshikawa move taking $D$ to $D'$, there is a unique biquasile coloring of 
$D'$ which agrees with the given coloring of $D$ outside the neighborhood of 
the move.
\end{theorem}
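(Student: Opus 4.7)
The plan is to reduce the theorem to a finite case analysis, one for each Yoshikawa move, treating the classical Reidemeister moves separately from the moves involving marked vertices. For the oriented Reidemeister moves, invariance of biquasile colorings has already been established in \cite{dsn2}, so those cases can be quoted directly. This leaves only the genuinely new Yoshikawa moves, i.e., those involving at least one marked vertex. For each such move, I would label the arcs (regions in the planar complement) entering the neighborhood with variables $a, b, x, y, \ldots$ representing the fixed coloring of $D$ on the outside, and then show that (a) the inside can be filled in uniquely to extend this boundary data, and (b) the induced coloring on the boundary of the other side of the move agrees with the starting data.

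For existence and uniqueness of the filling, I would use the quasigroup axiom (i): at each classical crossing and each marked vertex, the coloring condition expresses one region as an operation applied to two neighbors, and the invertibility of the operations $\ast,\cdot$ in each argument (encoded by $\backslash^\ast,/^\ast,\backslash,/$) lets me propagate the coloring across the neighborhood and solve for any internal regions uniquely. Performing this argument on both sides of a Yoshikawa move, I obtain candidate colorings on the two sides that agree on the outside boundary and are internally consistent.

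The main work is then to verify that these two candidate colorings actually assign the same labels to the regions on the boundary of the neighborhood. For each Yoshikawa move, this amounts to comparing two composite expressions in $\ast,\cdot$ (and their inverses) applied to the boundary labels. The move types that involve passing a strand under/over a marked vertex, and the move that slides a marked vertex past a crossing, will produce exactly the two identities appearing in axiom (ii) (and/or identities derivable from (i) and (ii)); these are designed so that the relation is satisfied. The remaining marked-vertex moves (such as the pure marked-vertex exchange and the move introducing or removing a pair of marked vertices) reduce, after using axiom (i), either to tautologies or to equations that follow formally from axiom (ii).

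The hard part will be the bookkeeping for the move that exchanges the over/under information on strands meeting at a marked vertex (the one whose coloring conditions are most tightly constrained by orientation): writing down the four boundary regions in terms of the two free parameters on each side of the move, then matching these after applying the quasigroup inverses, is where axiom (ii) is actually used. Once those two identities are checked, all remaining moves follow by the same template, establishing the bijection between colorings of $D$ and $D'$ that is the identity outside the neighborhood.
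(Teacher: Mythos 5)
Your proposal matches the paper's proof in structure and substance: the paper likewise cites \cite{dsn2} for the classical Reidemeister moves and then verifies the remaining Yoshikawa moves $\Gamma_4$, $\Gamma_4'$, $\Gamma_5$, $\Gamma_6$, $\Gamma_6'$, $\Gamma_7$, $\Gamma_8$ one at a time by labeling the regions around the move neighborhood, solving for the interior labels via the quasigroup axiom (i), and checking that the boundary labels agree on both sides (with axiom (ii) supplying the needed identities where the move mimics a Reidemeister III configuration). The only difference is presentational --- the paper carries out the case check diagrammatically rather than writing out the composite expressions --- so your plan is essentially the paper's argument.
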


\begin{proof}
We verify for each of the moves $\Gamma_4$, $\Gamma_4'$, $\Gamma_5$, $\Gamma_6$, 
$\Gamma_6'$, $\Gamma_7$ and $\Gamma_8$; see \cite{dsn2} for the case of the
classical Reidemeister moves.
\[\scalebox{0.9}{\includegraphics{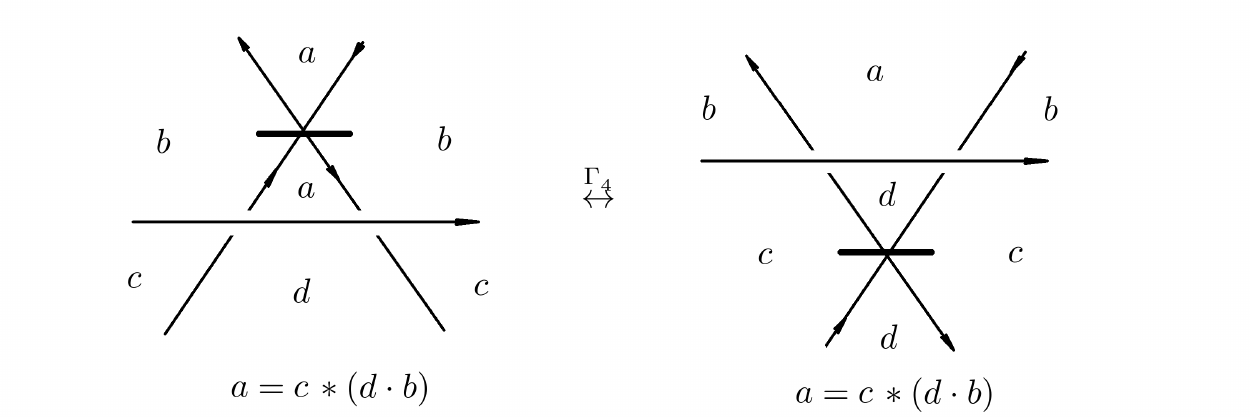}}\]
\[\scalebox{0.9}{\includegraphics{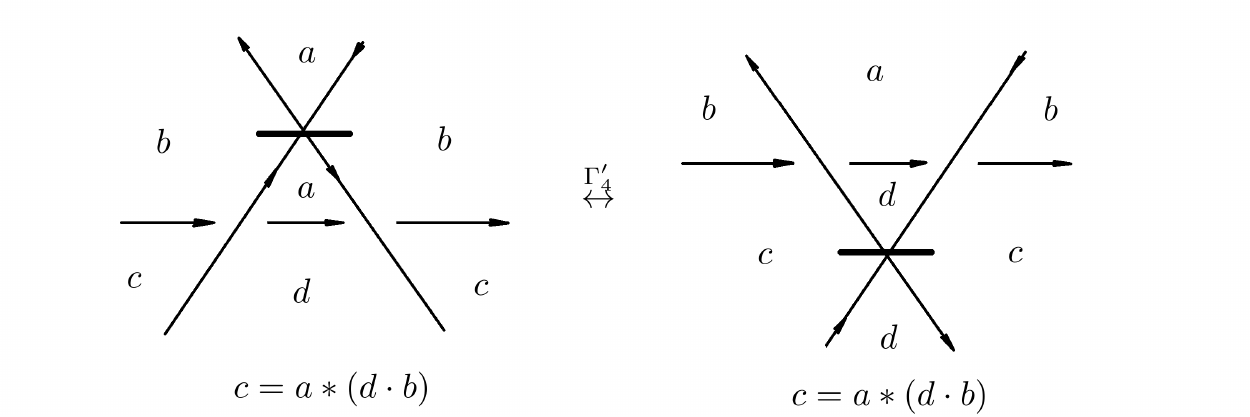}}\]
\[\scalebox{0.9}{\includegraphics{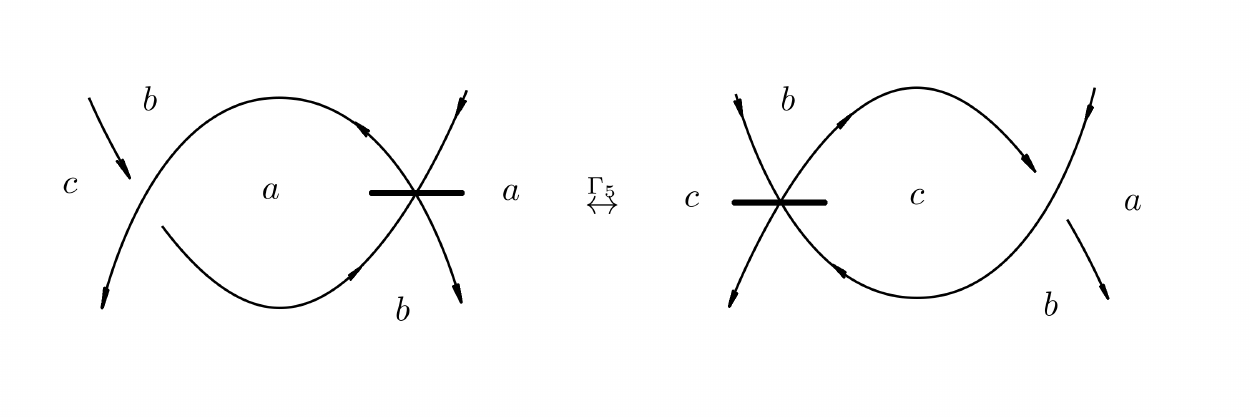}}\]
\[\scalebox{0.9}{\includegraphics{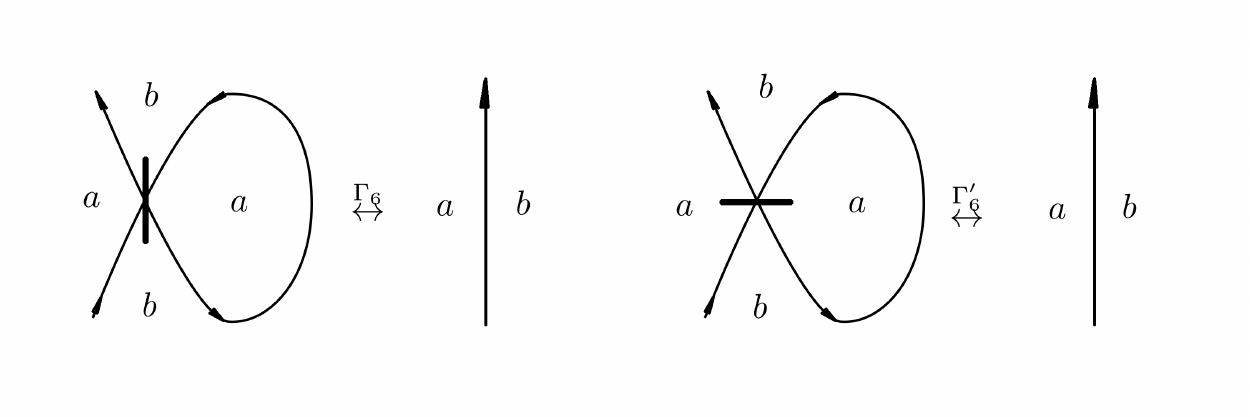}}\]
\[\scalebox{0.9}{\includegraphics{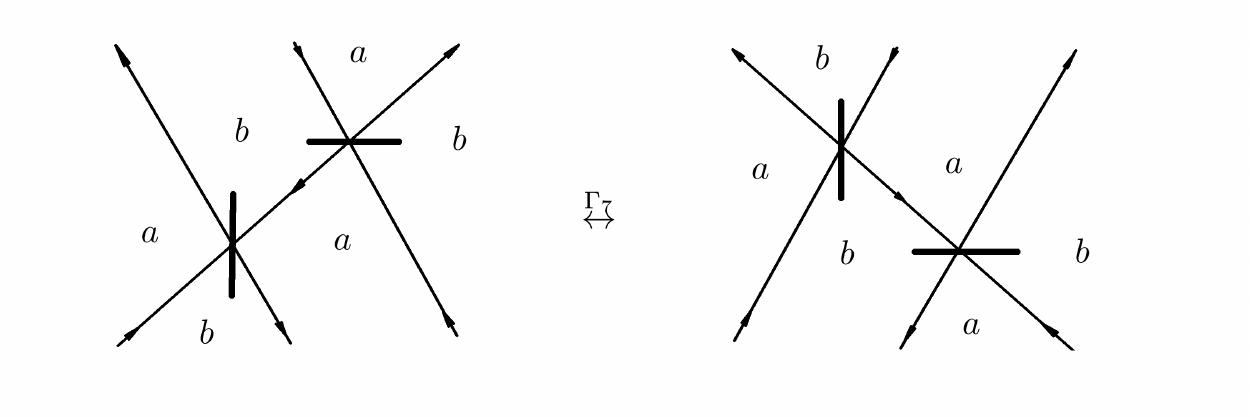}}\]
\[\scalebox{0.9}{\includegraphics{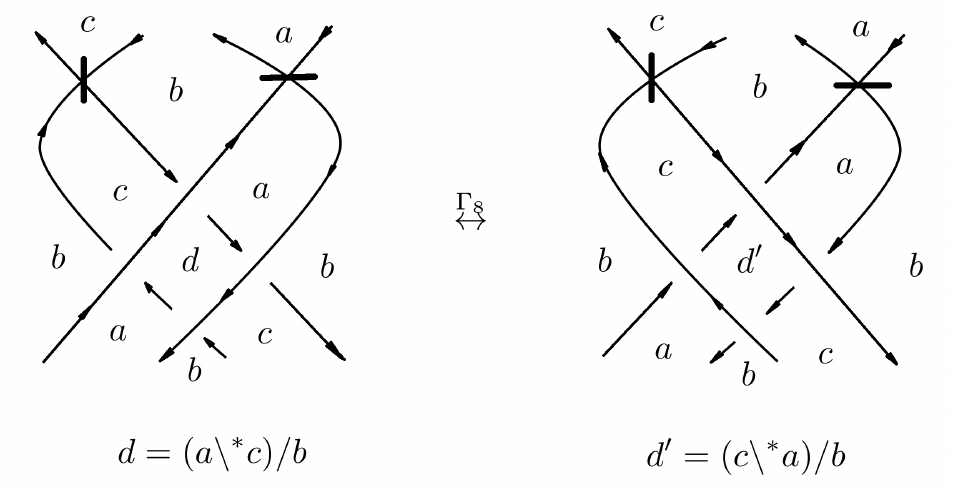}}\]
\end{proof}

\begin{corollary}
For any finite biquasile $X$, the number $\Phi_X^{\mathbb{Z}}(L)$ of biquasile 
colorings of a marked graph diagram $D$ representing an oriented surface-link 
$L$ is a surface-link invariant. We call this invariant the \textit{biquasile 
counting invariant}.
\end{corollary}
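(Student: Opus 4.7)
The plan is to deduce this directly from the previous theorem together with the generating set of moves from \cite{KJL}. For each oriented marked graph diagram $D$ representing $L$, let $\mathcal{C}_X(D)$ denote the set of biquasile colorings of $D$ by $X$. Since $X$ is finite and the planar complement of $D$ has only finitely many regions, $\mathcal{C}_X(D)$ is a finite set, so $|\mathcal{C}_X(D)|$ is a well-defined non-negative integer.

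First I would recall from \cite{KJL} that any two oriented marked graph diagrams representing ambient isotopic oriented surface-links are connected by a finite sequence of Yoshikawa moves $\Gamma_4,\Gamma_4',\Gamma_5,\Gamma_6,\Gamma_6',\Gamma_7,\Gamma_8$ together with oriented Reidemeister moves. For each individual such move $D \rightsquigarrow D'$, the previous theorem (together with the classical Reidemeister case handled in \cite{dsn2}) produces a function $\mathcal{C}_X(D) \to \mathcal{C}_X(D')$ sending a coloring of $D$ to the unique extension that agrees with it outside the neighborhood of the move. Applying the same theorem to the reverse move $D' \rightsquigarrow D$ produces a function in the opposite direction, and these two functions are mutually inverse because each is determined by, and preserves, the coloring on the unchanged outer region. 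Thus each move induces a bijection $\mathcal{C}_X(D) \cong \mathcal{C}_X(D')$, so composing bijections along the whole sequence yields $|\mathcal{C}_X(D)| = |\mathcal{C}_X(D')|$ for any two diagrams of $L$.

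There is essentially no obstacle here: the theorem has already done all of the work by verifying the local move invariance case by case, and the only thing to notice is that ``unique extension'' on both sides upgrades the implied function to a bijection. Consequently $\Phi_X^{\mathbb{Z}}(L) := |\mathcal{C}_X(D)|$ depends only on the ambient isotopy class of the oriented surface-link $L$, proving the corollary.
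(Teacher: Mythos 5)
Your argument is correct and is exactly the intended deduction: the paper states this corollary without further proof as an immediate consequence of the preceding theorem, relying on precisely the bijection-from-unique-extension reasoning you spell out. Your write-up just makes explicit the standard step of composing the move-induced bijections along a finite sequence of Yoshikawa and Reidemeister moves.
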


\begin{example}
Let $X$ be the Alexander biquasile structure on $\mathbb{Z}_7$ with $d=2$, 
$s=3$ and $n=4$, so we have
\[\begin{array}{rcl}
x \ast y & = & 5x+4y \\
x\cdot y & = & 2x+3y.
\end{array}\] 
Consider the marked graph diagram below which represents the surface-link
$6_1$. 
\[\includegraphics{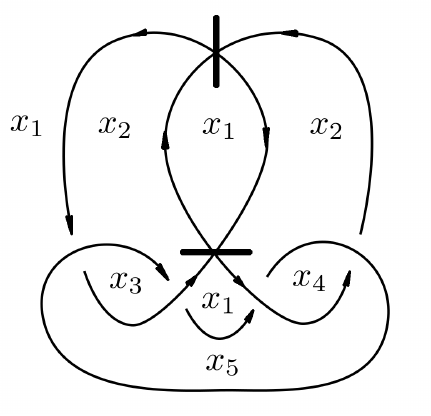}\]
Assigning variables $x_1\dots,x_6$ to the regions, we obtain system 
of coloring equations
\[\begin{array}{rcl}
x_1\ast(x_5\cdot x_2) & = & x_4 \\
x_4\ast(x_5\cdot x_2) & = & x_1 \\
x_1\ast(x_5\cdot x_2) & = & x_3 \\
x_3\ast(x_5\cdot x_2) & = & x_1 \\
\end{array}\leftrightarrow
\begin{array}{rcl}
5x_1+4(2x_5+3 x_2) & = & x_4 \\
5x_4+4(2x_5+3 x_2) & = & x_1 \\
5x_1+4(2x_5+3 x_2) & = & x_3 \\
5x_3+4(2x_5+3 x_2) & = & x_1 \\
\end{array}\leftrightarrow
\begin{array}{rcl}
5x_1+5x_2+6x_4+x_5 & = & 0 \\
6x_1+5x_2+5x_4+x_5 & = & 0 \\
5x_1+5x_2+6x_3+x_5 & = & 0 \\
6x_1+5x_2+5x_3+x_5 & = & 0 \\
\end{array}
\]
Then we have coefficient matrix
\[\left[\begin{array}{rrrrr}
5 & 5 & 0 & 6 & 1 \\
6 & 5 & 0 & 5 & 1\\
5 & 5 & 6 & 0 & 1\\
6 & 5 & 5 & 0 & 1
\end{array}\right]
\stackrel{\mathrm{row\ moves\ over\ }\mathbb{Z}_7}{\longrightarrow}
\left[\begin{array}{rrrrr}
1 & 2 & 0 & 2 & 6\\
0 & 1 & 3 & 2 & 3\\
0 & 0 & 1 & 6 & 0 \\
0 & 0 & 0 & 0 & 0 \\
\end{array}\right]
\]
so the kernel has dimension $2$ and the space of colorings has $7^2=49$ 
$X$-colorings, i.e. $\Phi_X^{\mathbb{Z}}(6_1)=49$.
Since the corresponding unlinked object (a standard unlinked torus and 
sphere) 
\[\includegraphics{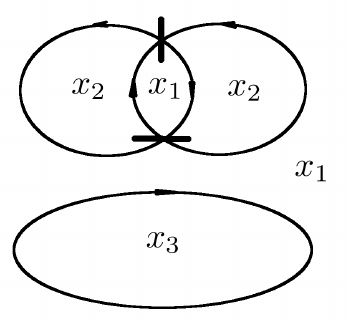}\]
has $7^3=343$ colorings by $X$, the biquasile counting invariant with respect
to this biquasile $X$ detects the nontrivality of the surface-link $6_1$.
\end{example}

\begin{example}\label{ex:3}
We selected three biquasiles of order 3 
\[
X_1=\left[\begin{array}{rrr|rrr}
1 & 2 & 3 & 1 & 2 & 3 \\
2 & 3 & 1 & 3 & 1 & 2 \\
3 & 1 & 2 & 2 & 3 & 1
\end{array}\right],\quad
X_2=\left[\begin{array}{rrr|rrr}
1 & 2 & 3 & 1 & 2 & 3 \\
3 & 1 & 2 & 2 & 3 & 1 \\
2 & 3 & 1 & 3 & 1 & 2 
\end{array}\right],\quad
X_3=\left[\begin{array}{rrr|rrr}
2 & 1 & 3 & 1 & 3 & 2 \\
1 & 3 & 2 & 3 & 2 & 1 \\
3 & 2 & 1 & 2 & 1& 3
\end{array}\right].
\]
and computed the counting invariant
for all of the orientable surface-links with $ch$-index up to 10 with the 
orientations as shown 
\[\scalebox{0.85}{\includegraphics{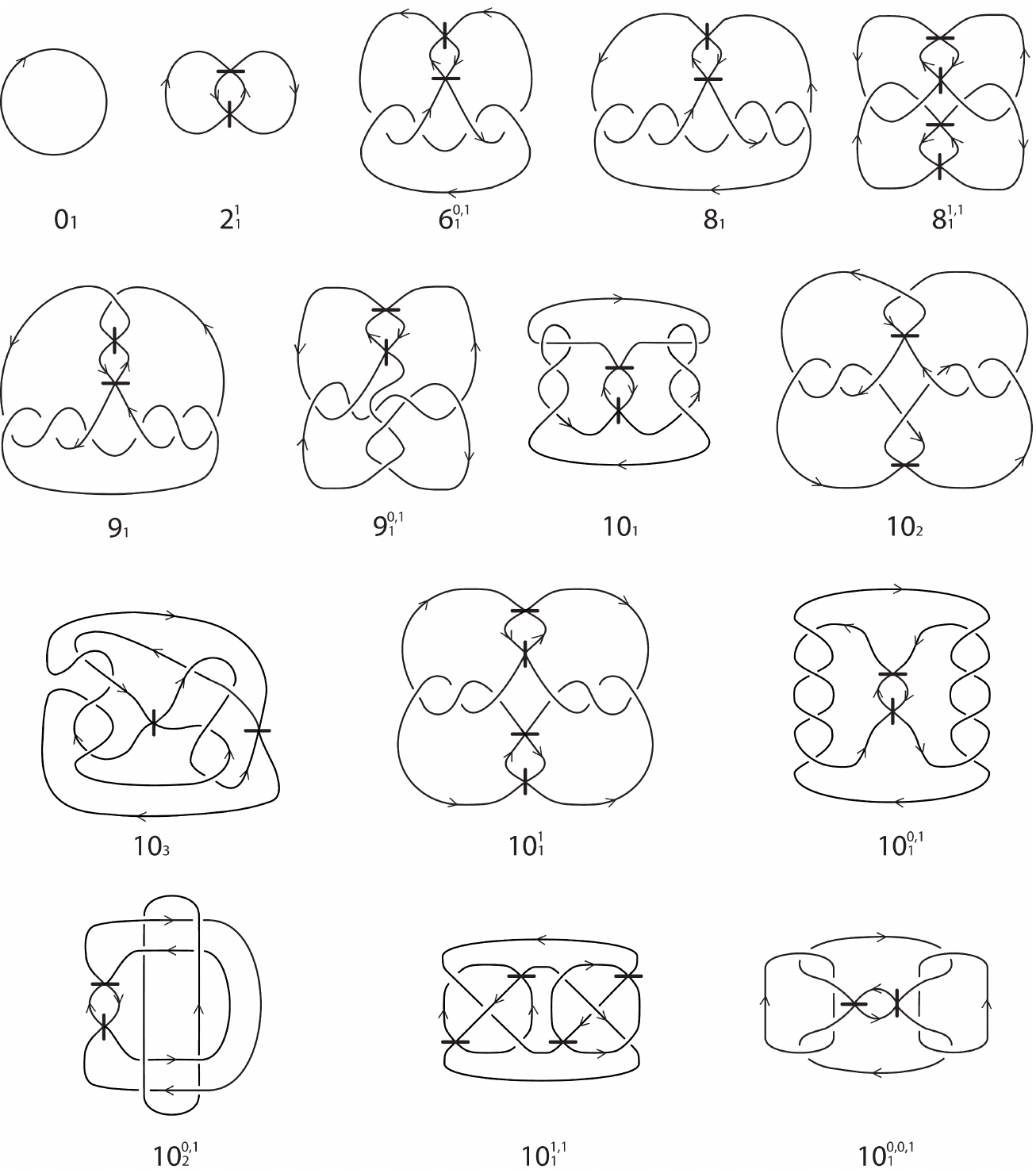}}\]
(see \cite{KJL1} for more) using our
\texttt{Python} code. The results are collected in the table.
\[\begin{array}{|r|rrrrrrrrrrrrrr|}\hline
L & 2_1 & 6_1^{0,1} & 8_1 & 8_1^{1,1} & 9_1  & 9_1^{0,1} & 10_1 
L & 10_2 & 10_3 & 10_1^1 & 10_1^{0,1} & 10_2^{0,1}  & 10_1^{1,1} & 10_1^{0,0,1} \\ \hline 
\Phi_{X_1}^{\mathbb{Z}} & 9 & 9 & 27 & 9 & 27 & 9 & 9 & 27 & 9 & 27 & 9 & 3 & 9 & 27  \\
\Phi_{X_2}^{\mathbb{Z}} & 9 & 27 & 9 & 27 & 3 & 27 & 9 & 9 & 9 & 9 & 27 & 3 & 27 & 81  \\ 
\Phi_{X_3}^{\mathbb{Z}} & 9 & 27 & 9 & 27 & 9 & 27 & 9 & 9 & 9 & 9 & 27 & 0 & 27 & 81 \\\hline  
\end{array}
\]

\end{example}

\begin{example}
Biquasile counting invariants are sensitive to orientation-reversals, More 
precisely, reversing the orientation of the sphere component in the 
surface-link $L=9_1^{0,1}$ to obtain $L'$ 
as depicted results in different numbers of 
$X$-colorings for all three of the biquasiles in example \ref{ex:3}.
\[
\raisebox{-0.5in}{\includegraphics{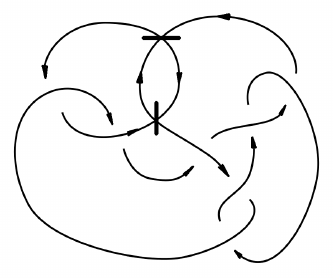}}
\quad 
\begin{array}{c}
\Phi_{X_1}^{\mathbb{Z}}(L)=9\\ 
\Phi_{X_2}^{\mathbb{Z}}(L)=27\\
\Phi_{X_3}^{\mathbb{Z}}(L)=27
\end{array}
\quad\quad
\raisebox{-0.5in}{\includegraphics{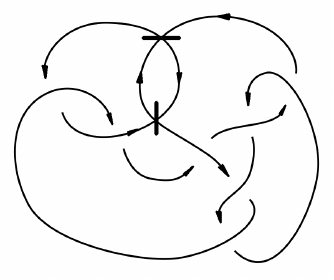}}
\quad 
\begin{array}{c}
\Phi_{X_1}^{\mathbb{Z}}(L')=3\\ 
\Phi_{X_2}^{\mathbb{Z}}(L')=9\\
\Phi_{X_3}^{\mathbb{Z}}(L')=0
\end{array}
 \]

\end{example}

\section{\large\textbf{Boltzmann Weight Enhancement}}\label{BW}

In \cite{cnn}, biquasile colorings of oriented link diagrams are enhanced with
\textit{Boltzmann weights}. In this section we extend this construction to
the case of oriented surface-links.

\begin{definition}
Let $X$ be a biquasile and $A$ an abelian group. Then an $A$-linear map 
$\phi:A[X^3]\to A$ from $A$-linear combinations of ordered triples of 
elements of $X$ to  $A$ is a \textit{Boltzmann weight} if for all
$x,y,a,b\in X$ we have
\begin{itemize}
\item[(i)]\[
\phi(x,a,a\backslash(x\backslash^{\ast}x))
=\phi(x,(x\backslash^{\ast} x)/b,b)
=0
\]
and
\item[(ii)]
\[\begin{array}{l}
\phi(x,a,b)+\phi(b,x\ast(a\cdot b),y)
+\phi(x\ast(a\cdot b),a,b\ast([x\ast(a\cdot b)]\cdot y)) \\
 = 
\phi(b,x,y) +
\phi(x,a,b\ast(x\cdot y))
+\phi(b\ast(x\cdot y), x\ast(a\cdot[b\ast(x\cdot y)]),y).
\end{array}.\]
\end{itemize}
\end{definition}

The Boltzmann weight definition collects the conditions required to make
the sum of $\phi(x,a,b)$ values at all crossings in a biquasile-labeled
oriented link diagram using the rule
\[\includegraphics{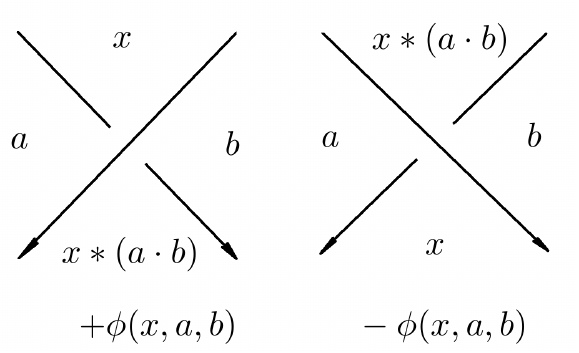}\]
unchanged by the oriented Reidemeister moves. Then for a classical oriented 
link $L$ the multiset
\[\Phi_X^{\phi,M}(L)=\left\{\sum_{\mathrm{crossing\ in \ } L_f}\pm\phi(x,a,b)\ |\ L_f\ X\ \mathrm{coloring\ of \ }L\right\}\]
is an invariant of links whose cardinality is the biquasile counting invariant.
The ``$M$'' here stands for ``multiset'' (as opposed to the polynomial
version of the invariant also defined \cite{cnn}) and $L_f$ is a biquasile 
coloring of the surface link diagram $L$ with coloring map $f$ assigning 
elements $f(r_j)\in X$ to each region $r_j$ in $L$. See \cite{cnn} for more.

\begin{lemma}\label{lem:bw}
Let $L$ be a marked graph diagram with a coloring by a biquasile $X$ and 
let $\phi:A[X^3]\to A$ be a Boltzmann weight for an abelian group
$A$. Then the sum 
\[\phi(L_f)=\sum_{\mathrm{crossing\ in \ } L_f}\pm\phi(x,a,b)\]
of contributions at the crossings and marked vertices
according to the rule
\[\includegraphics{jk-sn-11.pdf}\]
is unchanged by Yoshikawa moves.
\end{lemma}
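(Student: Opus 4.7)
The overall plan is to verify Boltzmann-weight invariance one Yoshikawa move at a time, following the template of the preceding theorem. The classical Reidemeister moves $\Gamma_1, \Gamma_2, \Gamma_3$ require no new work: they are handled by the main result of \cite{cnn}, where axioms (i) and (ii) were extracted precisely from R1 and R3, with R2 automatic from the pairing of $\pm\phi(x,a,b)$ signs at a positive/negative crossing pair. So the task reduces to checking the marked-vertex moves $\Gamma_4, \Gamma_4', \Gamma_5, \Gamma_6, \Gamma_6', \Gamma_7, \Gamma_8$.

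For each such move, the first step is to invoke Theorem 1 so that a coloring on one side of the move induces a unique coloring on the other agreeing on every boundary region. The region colors inside the neighborhood can then be written explicitly in terms of a small number of free inputs using the biquasile operations $\ast, \cdot$. Next I would expand $\phi(L_f)$ on both sides as a sum of $\pm\phi(x,a,b)$ contributions at each crossing and each marked vertex, using the rule from jk-sn-11. Since the contributions outside the neighborhood match trivially, invariance reduces to a local algebraic identity among $\phi$-values evaluated at explicit biquasile expressions in the input variables.

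I expect three regimes of difficulty. For the moves $\Gamma_4, \Gamma_4'$ and $\Gamma_5$ the two sides differ only by a rearrangement or cancellation of marked vertices; here the marked-vertex contribution rule should be designed so that the terms pair off and cancel directly, without invoking (i) or (ii). For $\Gamma_6$ and $\Gamma_6'$, which slide a marked vertex past a classical crossing, the crossing contribution shifts in a controlled way and the correction is absorbed by the marked-vertex terms on the opposite side, again essentially for free. The main obstacle is the pair $\Gamma_7$ and $\Gamma_8$: these involve two marked vertices interacting with classical crossings, and the corresponding identity among $\phi$-sums is the most delicate. My plan for these is to isolate the classical-crossing contributions and reduce them using axiom (ii) applied with the correct substitutions (with arguments of the form $x\ast(a\cdot b)$ and $b\ast(x\cdot y)$ matching those that appear in the coloring rule at a marked vertex), and then to verify that the remaining marked-vertex terms cancel via the cancellation identities in axiom (i) together with the quasigroup inverses $\backslash, /, \backslash^\ast, /^\ast$. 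If a direct algebraic reduction balks, a fallback is to decompose $\Gamma_7$ or $\Gamma_8$ into an R3-type step and a $\Gamma_6/\Gamma_6'$-type step whose invariance is already in hand, so that no genuinely new Boltzmann condition is forced.
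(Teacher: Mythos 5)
Your skeleton matches the paper's: defer $\Gamma_1$--$\Gamma_3$ to \cite{cnn} and check the remaining seven moves case by case, using the unique extension of colorings from Theorem~1. But the substance of the lemma is precisely that case check, and you never carry it out; moreover, your predictions about where the work lies indicate a misreading of the moves that would send you hunting for identities that are not needed. The paper's entire argument for $\Gamma_4,\Gamma_4',\Gamma_5,\Gamma_6,\Gamma_6',\Gamma_7,\Gamma_8$ is the observation that the contributions on the two sides of each move are literally equal, with no appeal to Boltzmann axioms (i) or (ii) at all. The key fact you are missing is that marked vertices contribute nothing to $\phi(L_f)$ --- the weight is determined entirely by the classical crossings (this is also what the later Proposition's proof leans on when it says the weight at $t=0$ agrees with the weight of either resolution). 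Consequently $\Gamma_6$, $\Gamma_6'$, $\Gamma_7$ and $\Gamma_8$, which contain no classical crossings on either side, are invariant for free; they are the \emph{easiest} cases, not the hardest. The only moves requiring any inspection are $\Gamma_4$, $\Gamma_4'$ and $\Gamma_5$, where one checks that the region labels surrounding the classical crossings are unchanged by the move, so the crossings contribute identical $\pm\phi(x,a,b)$ terms on both sides.

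Your plan to reduce $\Gamma_7$ and $\Gamma_8$ via axiom (ii) ``with arguments of the form $x\ast(a\cdot b)$'' is therefore solving a problem that does not arise, and your description of $\Gamma_6$, $\Gamma_6'$ as ``sliding a marked vertex past a classical crossing'' does not describe those moves (they involve a marked vertex together with a maximal/minimal point, with no crossings present). The proposed fallback of decomposing $\Gamma_7$ or $\Gamma_8$ into an R3-type step plus a $\Gamma_6$-type step is also not available: the Yoshikawa moves in the generating set of \cite{KJL} are independent generators, not consequences of one another. None of this makes the lemma false --- if you actually performed the local computations you would discover that everything cancels more easily than you expect --- but as written the proposal neither performs the verification nor identifies the one structural fact (marked vertices carry zero weight) that makes the verification immediate.
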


\begin{proof}
The case of classical Reidemeister moves is covered in \cite{cnn}. 
We observe that for each of the additional moves
$\Gamma_4$, $\Gamma_4'$, $\Gamma_5$, $\Gamma_6$, 
$\Gamma_6'$, $\Gamma_7$ and $\Gamma_8$ the contributions on each side
of each move are equal.
\end{proof}

\begin{corollary}
Let $L$ be a marked graph diagram, $X$ a biquasile, $A$ an abelian group and
$\phi:A[X^3]\to A$ a Boltzmann weight. Then the multiset 
\[\Phi_X^{\phi,M}(L)=\{\phi(L_f)\ |\ L_f\ X-\mathrm{coloring\ of\ }L\}\]
where $\phi(L_f)$ is as defined in Lemma \ref{lem:bw} 
and the polynomial
\[\Phi_X^{\phi}(L)=\sum_{L_f\ X-\mathrm{coloring\ of\ }L} u^{\phi(L_f)}\]
are invariants of surface-links known as \textit{biquasile Boltzmann 
enhancements} of $\Phi_X^{\mathbb{Z}}$. 
\end{corollary}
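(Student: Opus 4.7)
The plan is to deduce the corollary directly from the earlier Theorem on invariance of the coloring set and from Lemma \ref{lem:bw}. First, I would fix a Yoshikawa move (or classical oriented Reidemeister move) relating two marked graph diagrams $D$ and $D'$ of the same oriented surface-link $L$. By the Theorem on biquasile colorings, any $X$-coloring $L_f$ of $D$ extends uniquely to an $X$-coloring $L_{f'}$ of $D'$ agreeing with $L_f$ outside the disk where the move occurs, and this correspondence is a bijection between the coloring sets of $D$ and of $D'$.

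Next, I would apply Lemma \ref{lem:bw} to conclude that under this bijection $L_f \mapsto L_{f'}$, the Boltzmann sum is preserved, i.e.\ $\phi(L_f) = \phi(L_{f'})$. Because $\phi(L_f)$ is defined as a sum of contributions at each crossing or marked vertex, and the colorings agree outside the move's disk, the difference $\phi(L_f) - \phi(L_{f'})$ is localized to the crossings/vertices inside the move, and Lemma \ref{lem:bw} tells us exactly that this local difference vanishes for each of the Yoshikawa moves as well as the classical Reidemeister moves.

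Combining these two facts, the multiset $\Phi_X^{\phi,M}(D) = \{\phi(L_f) \mid L_f \text{ an } X\text{-coloring of } D\}$ is carried bijectively to $\Phi_X^{\phi,M}(D')$ preserving each element, so the two multisets are equal. Since any two marked graph diagrams of isotopic oriented surface-links are related by a finite sequence of Yoshikawa moves and oriented Reidemeister moves (as cited from \cite{KJL}), a straightforward induction on the length of such a sequence establishes that $\Phi_X^{\phi,M}(L)$ depends only on the isotopy class of $L$. The polynomial version $\Phi_X^{\phi}(L) = \sum_{L_f} u^{\phi(L_f)}$ is simply the generating function of this multiset, so its invariance is immediate from the invariance of the multiset.

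I expect no serious obstacle here, as all the substantive work has been front-loaded into Lemma \ref{lem:bw} and the invariance theorem for colorings. The only care needed is to note that signed contributions at classical crossings and the unsigned contributions at marked vertices together assemble into a single well-defined sum $\phi(L_f)$, and that this convention matches the one used in Lemma \ref{lem:bw}; once that is fixed, the argument is purely formal.
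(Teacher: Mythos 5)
Your argument is correct and is exactly the reasoning the paper leaves implicit: the corollary follows by combining the bijection of colorings from the invariance theorem with the weight-preservation of Lemma \ref{lem:bw}, move by move. The paper offers no separate proof for this corollary, so your write-up simply makes the intended deduction explicit.
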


\begin{proposition}
If $L$ is a marked graph diagram representing a cobordism between
oriented classical links $L_1$ and $L_2$ then we have inclusions
\[\Phi_X^{\phi,M}(L)\subset\Phi_X^{\phi,M}(L_1)
\quad\mathrm{and}\quad
\Phi_X^{\phi,M}(L)\subset\Phi_X^{\phi,M}(L_2)\]
for every Boltzmann weight $\phi$.
\end{proposition}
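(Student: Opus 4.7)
The plan is to construct, from each $X$-coloring $f$ of the marked graph diagram $L$, induced $X$-colorings $f_\pm$ of the classical resolutions $L_\pm = L_\pm(L)$, show the assignment $f\mapsto f_\pm$ is injective and preserves the Boltzmann sum, and then conclude the multiset inclusions since $\{L_1,L_2\}=\{L_-,L_+\}$.

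First I would describe the restriction. Each marked vertex of $L$ is replaced in $L_\pm$ by a planar smoothing that merges two of the four regions meeting at the marker into a single region of $L_\pm$. I would read off from the coloring rule at markers (the right-hand panel of \texttt{jk-sn-1.pdf}) that, under any valid $f$, the two regions identified by the $L_-$-smoothing already carry the same element of $X$, and similarly for the $L_+$-smoothing. Hence $f$ descends to well-defined region colorings $f_\pm$ of $L_\pm$, and because a smoothing imposes no coloring condition while the classical crossings of $L$ persist unchanged in $L_\pm$, each $f_\pm$ automatically satisfies the biquasile coloring condition at every classical crossing.

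Next I would compare Boltzmann weights. By Lemma \ref{lem:bw}, $\phi(L_f)$ is a signed sum over both classical crossings and marked vertices of $L$, whereas $\phi((L_\pm)_{f_\pm})$ sums only over classical crossings, which are identical on both sides with identical colorings. So it suffices to show that each marker of $L$ contributes $0$ to $\phi(L_f)$. The triple $(x,a,b)$ extracted at a marker by the rule of Lemma \ref{lem:bw} has a special form dictated by the marker coloring rule; my claim is that this triple is exactly one of the two distinguished kernel forms $(x,a,a\backslash(x\backslash^{\ast}x))$ or $(x,(x\backslash^{\ast} x)/b,b)$ appearing in axiom (i) of the Boltzmann weight, according to which of the two consistent orientation types (\texttt{jk-sn-23.pdf} versus \texttt{jk-sn-24.pdf}) the marker exhibits. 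Axiom (i) then forces the marker contribution to vanish, giving $\phi(L_f)=\phi((L_\pm)_{f_\pm})$.

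Injectivity of $f\mapsto f_\pm$ is then immediate: every region of $L$ is contained in a single region of $L_\pm$ whose $f_\pm$-value equals the $f$-value on each of its constituents, so distinct $f,g$ produce distinct $f_\pm,g_\pm$. The image of $f\mapsto f_\pm$ is therefore a subfamily of $X$-colorings of $L_\pm$ whose Boltzmann weights reproduce the multiset $\Phi_X^{\phi,M}(L)$, which yields both claimed inclusions. I expect the main obstacle to be matching the marker coloring labels with the precise kernel forms of $\phi$ in axiom (i) --- orientations, cyclic ordering of the four regions around a marker, and the sign of the marker contribution all need to be tracked --- but once this matching is established the remaining steps are routine.
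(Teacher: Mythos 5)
Your overall skeleton matches the paper's (quite terse) argument: a coloring of the marked graph diagram induces colorings of both resolutions, the induced map on colorings is injective, and the Boltzmann sums agree because only classical crossings matter. Your explicit attention to injectivity is a genuine improvement --- the paper leaves it implicit, but it is needed for the \emph{multiset} inclusion. However, the mechanism you propose for the key step is not right. You claim the triple read off at a marked vertex is one of the two kernel triples of axiom (i) of the Boltzmann weight definition. Axiom (i) concerns triples of the form $(x,a,a\backslash(x\backslash^{\ast}x))$ and $(x,(x\backslash^{\ast}x)/b,b)$, i.e.\ exactly those for which $x\ast(a\cdot b)=x$; these are the triples arising at Reidemeister I kinks, and the two forms correspond to the two kinks, not to the two consistent orientation types of a marker. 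A marked vertex instead carries the relation $y=x\ast(a\cdot b)$ (imposed in both directions, as one sees in the paper's $6_1$ example, where a marked vertex contributes the pair of equations $x_1\ast(x_5\cdot x_2)=x_4$ and $x_4\ast(x_5\cdot x_2)=x_1$), and there is no reason for $y=x$ there. The paper's actual reason is simpler and definitional: the weight rule assigns $\pm\phi(x,a,b)$ only at classical crossings and no contribution at marked vertices, which is what ``the Boltzmann weight is determined by its values at classical crossings'' means and what the invariance check for the moves $\Gamma_4$--$\Gamma_8$ relies on. So your step (2) reaches the right conclusion for the wrong reason, and as written the appeal to axiom (i) would fail.

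There is a second soft spot. You assert that the marker coloring rule forces the two regions merged by the $L_-$-smoothing to carry equal colors, and likewise for the $L_+$-smoothing. That would mean both opposite pairs of regions at every marker are monochromatic, which is not what the coloring rule at a marked vertex says (again compare the $6_1$ example, where the marker relations are $\ast$-equations between distinct region variables, not equalities). The descent of a coloring of $L$ to colorings of $L_{\pm}$ therefore needs to be justified from the actual marker rule and the actual region identifications effected by the smoothings, not from an assumed color equality; this is precisely the content of the paper's first sentence (``every $X$-coloring of a marked graph diagram extends to an $X$-coloring of both of its resolutions''), which you have restated rather than proved. In short: right architecture, but the two pillars --- why colorings pass to the resolutions, and why markers contribute nothing to the weight --- are each supported by an incorrect or unsubstantiated mechanism.
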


\begin{proof}
Every $X$-coloring of a marked graph diagram extends to an $X$-coloring of 
both of its resolutions.
Since the Boltzmann weight is determined by its values at classical crossings,
the Boltzmann weight at the level $t=0$ is the same as that for $t>0$ and
for $t<0$.
\end{proof}

\begin{corollary}
If $L$ is an marked graph diagram representing a closed surface-link, i.e.
a cobordism between unlinks, then $\Phi_X^{\phi}(L)=|\Phi_X^{\mathbb{Z}}(L)|$
for every Boltzmann weight $\phi$, i.e., the Boltzmann enhancement is trivial
for closed surface-links.
\end{corollary}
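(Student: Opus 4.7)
The plan is to invoke the preceding Proposition and use the fact that the Boltzmann multiset of an unlink consists entirely of zeros. Since $L$ represents a closed surface-link, the underlying marked graph diagram is admissible, so by the discussion in Section \ref{M} both resolutions $L_{-}(D)$ and $L_{+}(D)$ are trivial link diagrams. Applying the Proposition with, say, $L_1=L_{-}(D)$ gives the multiset inclusion $\Phi_X^{\phi,M}(L)\subset\Phi_X^{\phi,M}(L_{-}(D))$, and the problem reduces to computing the Boltzmann multiset of an unlink.

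Next I would compute $\Phi_X^{\phi,M}(U)$ for a trivial link $U$. By invariance under the oriented Reidemeister moves (the classical biquasile Boltzmann invariance established in \cite{cnn}, which is also the starting point of Lemma \ref{lem:bw}), the multiset $\Phi_X^{\phi,M}(U)$ may be evaluated on any diagram of $U$. Choosing the standard diagram of $U$ consisting of disjoint simple closed curves with no crossings and no marked vertices, the defining sum $\phi(L_f)=\sum_{\text{crossings}}\pm\phi(x,a,b)$ is empty and hence equals the identity $0\in A$ for every biquasile coloring $L_f$. Therefore $\Phi_X^{\phi,M}(U)=\{0,0,\ldots,0\}$, a multiset whose cardinality is $\Phi_X^{\mathbb{Z}}(U)$.

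Combining the two steps, the multiset inclusion $\Phi_X^{\phi,M}(L)\subset\{0,0,\ldots,0\}$ forces $\phi(L_f)=0$ for every $X$-coloring $L_f$ of $L$, whence
\[
\Phi_X^{\phi}(L)=\sum_{L_f} u^{\phi(L_f)}=\sum_{L_f} u^{0}=|\Phi_X^{\mathbb{Z}}(L)|,
\]
as claimed. There is essentially no technical obstacle here: once the Proposition is in hand, the corollary is a one-line consequence of the trivial observation that the Boltzmann weight on a crossing-free diagram is zero. The only point worth verifying carefully is that the multiset inclusion from the Proposition is interpreted strictly enough that containment in a multiset of zeros forces each individual weight to vanish, which is immediate from its proof (each coloring of $L$ restricts to a coloring of the resolutions, and the Boltzmann weight at the level $t=0$ coincides with the weight at $t>0$ and $t<0$).
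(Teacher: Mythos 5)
Your proposal is correct and is exactly the argument the paper intends (the corollary is stated without a written proof, but it follows from the preceding Proposition in just the way you describe): admissibility gives trivial resolutions, the crossing-free unlink diagram has Boltzmann multiset consisting entirely of zeros, and the multiset inclusion then forces every $\phi(L_f)$ to vanish, so $\Phi_X^{\phi}(L)=\Phi_X^{\mathbb{Z}}(L)\cdot u^0$. Your closing remark about checking that the inclusion is witnessed coloring-by-coloring (each coloring of $L$ restricts to a coloring of the resolution with the same weight) is the right point to be careful about, and it is indeed supplied by the Proposition's proof.
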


We also have the following easy observation:

\begin{proposition}
The biquasile counting invariant and its Boltzmann enhancement do not
detect the genus of the surface-link or cobordism.
\end{proposition}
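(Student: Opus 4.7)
The plan is to produce a direct counterexample: two surface-links (or cobordisms) of different genera whose biquasile counting invariants and Boltzmann enhancements agree. The conceptual justification for why such counterexamples must exist is that both $\Phi_X^{\mathbb{Z}}$ and $\Phi_X^{\phi,M}$ are constructed purely from local region-colorings at crossings and marked vertices. Neither construction references Euler characteristic, handle decompositions, or any global topological datum of the represented surface, so there is no \emph{a priori} mechanism by which the genus could be read off. Thus I would look for explicit surface-links of differing genus whose marked graph diagrams accidentally admit identical coloring statistics.

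The first approach I would pursue is a direct appeal to the data in Example \ref{ex:3}. The table there lists computed counting invariants for every orientable surface-link with $ch$-index at most $10$ under three biquasiles of order $3$. Among the listed surface-links one finds pairs whose components (as recorded by the superscripts in the Yoshikawa notation $n_k^{g_1,\ldots}$, explained in \cite{KJL1}) include tori alongside pairs that are unions of spheres, yet the invariants coincide. For instance any pair in the row of $\Phi_{X_1}^{\mathbb{Z}}$ sharing the common value $9$ but with different superscript tuples suffices, provided the different tuples correspond to different total genera. Verifying this is a finite check against the tables of \cite{KJL1}. Because any such pair of closed surface-links yields trivial Boltzmann enhancements (by the preceding Corollary), the enhancement also fails to distinguish them.

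As a complementary and more structural argument, I would describe a local handle-stabilization move on marked graph diagrams: replace a small arc by a local configuration consisting of two new marked vertices joined by two parallel arcs arranged so that both resolutions of the local piece are trivial unknotted arcs. This operation adds a trivial 1-handle to the surface-link (increasing the genus by one) but, by the quasigroup axiom (i), the coloring constraints imposed at the two new markers are equivalent to invertible substitutions. Hence colorings before and after the stabilization are in bijection, and since no new classical crossings appear, the Boltzmann contribution of the local piece is zero, giving the same Boltzmann multiset as well.

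The main obstacle is primarily a verification burden rather than a conceptual one. For the table-based approach, one must carefully consult \cite{KJL1} to confirm that two chosen entries really have different total genera, which is straightforward but dependent on external data. For the handle-stabilization approach, the subtle step is showing that the local coloring constraints at the two inserted markers truly impose no global condition; this amounts to checking that the two substitutions dictated by the marker rule compose (via axiom (i)) to the identity on the affected region labels, so that any coloring of the original diagram extends uniquely to the stabilized one. Once either argument is carried through, the proposition follows immediately for both $\Phi_X^{\mathbb{Z}}$ and $\Phi_X^{\phi,M}$.
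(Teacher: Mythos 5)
Your second, structural argument is essentially the paper's proof: the paper disposes of the proposition in one line by exhibiting the local stabilization move on a marked graph diagram and observing that it changes neither the number of colorings nor the Boltzmann weight of any coloring. Your elaboration of why this works --- the two inserted markers impose constraints that are invertible substitutions by axiom (i), so colorings extend uniquely, and no classical crossings are created so the Boltzmann sum is unaffected --- is exactly the verification the paper leaves to its figure. So on this route you are on target, and the extra detail you supply about composing the two marker substitutions to the identity is the right thing to check against the local coloring rule.

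Your first, table-based approach, however, would not prove the proposition as stated and you should not lean on it. The claim is universal: \emph{no} biquasile counting invariant and \emph{no} Boltzmann enhancement detects genus. Finding two surface-links of different genus on which the three particular order-$3$ biquasiles of Example \ref{ex:3} happen to agree only shows that those three invariants fail on those examples; some other biquasile might still separate them. Moreover such coincidences give no information about cobordisms, which the proposition also covers, and for non-closed cobordisms the Boltzmann enhancement is not automatically trivial, so agreement of counting invariants would not even settle the enhanced case. Only the stabilization argument delivers the full statement, so treat the table observation as motivation at best and make the handle-stabilization bijection the actual proof.
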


\begin{proof}
We simply note that stabilization moves do not change either 
the number of colorings or the Boltzmann weight of a coloring:
\[\includegraphics{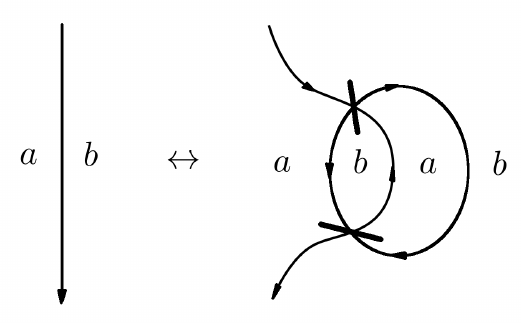}\]
\end{proof}

Our final example shows that biquasile Boltzmann enhancements \textit{can} 
detect knottedness of cobordisms. More precisely, a classical link diagram can 
be considered as a marked graph diagram without marked vertices, which 
corresponds to the trivial cobordism $L\times [0,1]$ between two copies of $L$. 

\begin{example}
Let $L2a1$ be the trivial cobordism between two Hopf links and let
$L$ be the cobordisms between two Hopf links determined 
by the marked graph diagram below: 
\[\includegraphics{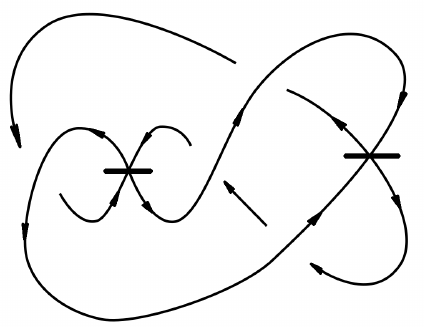}\]
Let $X$ be the biquasile with matrix
\[\left[\begin{array}{rr|rr}
1 & 2 & 2 & 1 \\
2 & 1 & 1 & 2
\end{array}\right],\]
let $R=\mathbb{Z}_5$ and let $\phi=\chi_{(2,1,2)}+\chi_{(2,2,1)}$. Then we have
\[\Phi_X^{\phi}(L)=4u+4\ne 4=\Phi_X^{\phi}(L2a1)\]
and the biquasile Boltzmann enhancement detects the difference between $L$ 
and $L2a1$.
\end{example}

\section{\large\textbf{Questions}}\label{Q}

We end with some questions for future research. 

A marked diagram which is not admissible, i.e., such that $\mathcal{L}_+$, 
$\mathcal{L}_-$ or both are not unlinks, defines a cobordism between 
$\mathcal{L}_+$ and $\mathcal{L}_-$. In particular, if a knot $K$ is slice then 
a cobordism exists between $K$ and an unknot. Can biquasile invariants be 
used to detect when a knot is not slice?

Currently biquasile invariants have only been defined for classical knots and 
links of dimensions 1 and 2. What about biquasile invariants for virtual knots
and links and virtual surface-links?

\bibliography{jk-sn-rev}{}
\bibliographystyle{abbrv}

\bigskip

\noindent
\textsc{Osaka City University Advanced Mathematical Institute \\ 
Osaka City University \\ 
Sugimoto, Sumiyoshi-ku, Osaka 558-8585, Japan
}

\medskip

\noindent
\textsc{Department of Mathematical Sciences \\
Claremont McKenna College \\
850 Columbia Ave. \\
Claremont, CA 91711}

\end{document}